\declaretheorem[name=Theorem,within=section]{thm}
\newtheorem{lemma}[thm]{Lemma}
\newtheorem{prop}[thm]{Proposition}
\theoremstyle{definition}
\newtheorem*{remark}{Remark}
\renewcommand{\Pr}{\mathbb{P}}
\newcommand{\eps}{\varepsilon}
\newcommand{\br}[1]{\llbracket{#1}\rrbracket}
\renewcommand{\le}{\leqslant}
\renewcommand{\ge}{\geqslant}
\title{On the anti-Ramsey threshold}
\author{Eden Kuperwasser}
\thanks{This research was supported by the ERC Consolidator Grant 101044123 (RandomHypGra)}
\address{School of Mathematical Sciences, Tel Aviv University, Tel Aviv 6997801, Israel}
\email{kuperwasser@tauex.tau.ac.il}
\begin{document}

\maketitle

\begin{abstract}
We say that a graph $G$ is anti-Ramsey for a graph $H$ if any proper edge-colouring of $G$ yields a rainbow copy of $H$, i.e.\ a copy of $H$ whose edges all receive different colours. In this work we determine the threshold at which the binomial random graph becomes anti-Ramsey for any fixed graph $H$, given that $H$ is sufficiently dense. Our proof employs a graph decomposition lemma in the style of the Nine Dragon Tree theorem that may be of independent interest.
\end{abstract}

\section{Introduction}

Let $G$ and $H$ be graphs. We say that $G$ is \emph{Ramsey} for $H$ if any $2$-colouring of the edges of $G$ admits a monochromatic copy of $H$. We say that $G$ is \emph{anti-Ramsey} for $H$ if any proper colouring of the edges of $G$ admits a \emph{rainbow} copy of $H$, i.e.\ a copy whose edges all have distinct colours. 

The first investigations into the Ramsey and anti-Ramsey behaviours of random graphs were both motivated by a search for locally sparse graphs that satisfy these properties. Frankl and R\"odl~\cite{FraRod86} proved an upper bound on the threshold that $G_{n,p}$ is Ramsey for triangles in order to find an efficient construction of $K_4$-free graphs that were still Ramsey for $K_3$. A few years later R\"odl and Tuza~\cite{RodTuz92} found graphs with arbitrarily large girth that are anti-Ramsey for cycles by upper bounding the threshold where $G_{n,p}$ becomes anti-Ramsey for them. However, while R\"odl and Ruci\'nski~\cite{RodRuc95} proved the Random Ramsey theorem in 1995, locating the Ramsey threshold for any graph $H$, nearly three decades have passed and the anti-Ramsey threshold remains unresolved.

One basic intuition for the threshold --- common to both these properties --- is for $G_{n,p}$ to have about as many copies of $H$ as it does edges. Indeed, if we view the copies as constraints and the edges as corresponding to degrees of freedom, then we might expect that $G_{n,p}$ has the anti-Ramsey property when the number of copies is much larger than the number of edges. The threshold indicated by this intuition is readily captured by the \emph{maximum $2$-density}
 \[m_2(H) \coloneqq \max \left\{ \frac{e_{H'} - 1}{v_{H'} - 2} \colon H' \subseteq H, v_{H'} \ge 3 \right\} \cup \left\{ \frac{1}{2} \right\} ,\] 
and indeed the content of the Random Ramsey theorem is that the threshold that $G_{n,p}$ is Ramsey for $H$ (for almost\footnote{The exception being star forests. If one excludes the path on four vertices as well this threshold is shown to be semi-sharp.} any $H$) is at $p \sim n^{-1/m_2(H)}$. The following result of Kohayakawa, Konstadinidis, and Mota~\cite{KohKonMot14} gives a similar upper bound for the anti-Ramsey threshold.

\begin{thm}[\cite{KohKonMot14}]
\label{thm:1-statement}
For any graph $H$ there is a constant $c_1$ such that \[ \lim_{n \to \infty} \Pr \left( \text{$G_{n,p}$ is anti-Ramsey for $H$} \right) = 1\] whenever $p \ge c_1 \cdot n^{-1/m_2(H)}$. 
\end{thm}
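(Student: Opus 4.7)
I would pursue a two-round-exposure deletion-and-completion argument. First, a standard reduction lets us assume $H$ is strictly $2$-balanced, i.e.\ $m_2(H')<m_2(H)$ for every proper subgraph $H'\subsetneq H$ with $v_{H'}\ge 3$. If $H$ is not $2$-balanced, pick a $2$-balanced subgraph $H^*\subseteq H$ attaining $m_2(H^*)=m_2(H)$; then, writing $\rho(F)=(e_F-1)/(v_F-2)$, we have $\rho(H)<\rho(H^*)$, so at $p=Cn^{-1/m_2(H)}$ the number of extensions of a fixed copy of $H^*$ to a full copy of $H$ in $G_{n,p}$ tends to infinity, and a rainbow $H^*$ extends greedily to a rainbow $H$ using fresh colours on the remaining edges.

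Next write $G_{n,p}=G_1\cup G_2$ with independent $G_i\sim G(n,p_i)$ and $p_1+p_2=p$. In the first round, Janson's inequality together with the small subgraph counting lemma yields that $G_1$ whp contains $\Theta(n^{v_H}p^{e_H-1})$ copies of $H^-:=H-e$ for a chosen edge $e\in E(H)$. Now fix any proper colouring $\chi$ of $G_{n,p}$; each colour class is a matching, so for any copy $F$ of $H^-$ the palette $\chi(F)$ has size $e_H-1$, and the set of edges of $G$ whose colour lies in $\chi(F)$ forms a graph of maximum degree at most $e_H-1$, hence with at most $\tfrac{1}{2}(e_H-1)n$ edges in total. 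In the second round, each copy $F$ becomes a copy of $H$ precisely when the corresponding completing edge appears in $G_2$, and is rainbow whenever that completing edge additionally has a colour outside $\chi(F)$. A first-moment / Janson computation on $G_2$ then gives that whp some $H^-$-copy of $G_1$ is completed by an edge of $G_2$ into a rainbow copy of $H$.

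The main obstacle is that $\chi$ is chosen adversarially after seeing all of $G_{n,p}$, so $G_2$ cannot simply be treated as independent of the forbidden palettes attached to the $H^-$-copies in $G_1$. The remedy is to expose $G_1$ first, condition on $\chi$ restricted to $E(G_1)$, and observe that any extension of $\chi$ to $E(G_2)$ must still form a proper edge-colouring of the whole graph; hence the ``bad'' completions are constrained to form matchings and are substantially outnumbered by the ``good'' completions distributed across the $\Theta(n^{v_H}p^{e_H-1})$ candidate $H^-$-copies. Making this quantitative requires a union bound over the possible palettes on $H^-$-copies and over the ways $\chi$ may extend to $E(G_2)$. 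The threshold $p=Cn^{-1/m_2(H)}$ emerges as precisely the density at which the expected number of rainbow completions, after accounting for the matching constraint, overcomes the combinatorial cost of this union bound; establishing the requisite Janson-type concentration for the family of rainbow completions uniformly over proper colourings is the most delicate part of the argument.
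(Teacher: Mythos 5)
This theorem is cited by the paper from~\cite{KohKonMot14}; the paper gives no proof of it, so there is no internal argument to compare against. Evaluating your proposal on its own merits, there is a genuine gap at the step you yourself flag as delicate.

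You correctly identify the central obstacle: the adversary chooses $\chi$ after seeing all of $G_{n,p}$, so a naive two-round exposure does not give independence between the palettes attached to $H^-$-copies in $G_1$ and the completing edges revealed in $G_2$. But your proposed remedy --- condition on $G_1$ and on $\chi|_{E(G_1)}$, then union bound over the proper extensions of $\chi$ to $E(G_2)$ --- cannot be made to work by a Janson-type concentration estimate. The number of proper extensions of a partial colouring to $E(G_2)$ is superexponential in $e(G_2)=\Theta(n^2p)$: each newly exposed edge has $\Omega(np)$ admissible colour choices in general, so the union bound costs roughly $(np)^{\Theta(n^2p)}$. Meanwhile, the failure probability for ``no $H^-$-copy of $G_1$ receives a good completing edge in $G_2$'' that Janson can deliver is at best $\exp(-\Theta(n^{v_H}p^{e_H}))$, and at $p\sim n^{-1/m_2(H)}$ the exponent $n^{v_H}p^{e_H}$ is only of the same order as $n^2p$. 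The union bound therefore loses by an unbounded factor. In fact the problem starts earlier: the palettes $\chi(F)$ on the $H^-$-copies $F\subseteq G_1$ already depend on all of $\chi|_{E(G_1)}$, so even the first-round union bound must range over all proper colourings of $G_1$, not merely over ``possible palettes,'' and that set is already far too large.

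There is also a secondary issue with your reduction to strictly $2$-balanced $H$. Being anti-Ramsey for a subgraph $H^*\subseteq H$ with $m_2(H^*)=m_2(H)$ does not immediately give anti-Ramsey for $H$: the extension edges receive their colours from $\chi$, not from you, so ``using fresh colours on the remaining edges'' is not something you get for free. One would have to argue that among the many extensions of a rainbow $H^*$-copy, at least one has all extension edges rainbow and with colours disjoint from $\chi(H^*)$, uniformly over adversarial $\chi$; this requires its own argument. It is typically cleaner to prove the $1$-statement directly without this reduction.

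The known proofs of the $1$-statement sidestep the union bound over colourings entirely. The argument in~\cite{KohKonMot14} (and related $1$-statement arguments in the Ramsey setting) instead proceeds deterministically from a whp structural property of $G_{n,p}$: roughly, one shows that whp $G_{n,p}$ is such that after deleting a small fraction of its edges one still finds many copies of $H$ (a Rödl--Ruciński-type robustness statement, ultimately resting on sparse regularity / K\L R-type counting), and that for any proper colouring the ``collision pairs'' (non-adjacent same-coloured edge pairs) can destroy only a vanishing fraction of these copies. Because the probabilistic input is a single whp event about $G_{n,p}$, one never needs to union bound over the adversary's choices. I'd recommend reworking your approach around such a deterministic-from-a-whp-event structure rather than trying to salvage the two-round exposure.
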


However, this upper bound is not always tight. For example, the anti-Ramsey threshold for $K_3$ coincides with the threshold that $G_{n,p}$ contains even one copy of the triangle, since any proper edge colouring of the triangle must be rainbow. Of course, this happens much earlier than when the number of triangles rivals the number of edges.

We can generalize this example in the following way. For every graph $G$ that is anti-Ramsey for $H$, the threshold for $G_{n,p}$ containing a copy of $G$ is another upper bound for the anti-Ramsey threshold. Recall that the threshold for containing a graph $G$ is at $p \sim n^{-1/m(G)}$ where we define the \emph{maximal density} \[ m(G) \coloneqq \max \left\{ \frac{e_{G'}}{v_{G'}} \colon \emptyset \neq G' \subseteq G \right\},\] and that for any $c_0 > 0$, we have $\lim_{n\to \infty} \Pr(G_{n,p} \supseteq G) > 0$ when $p = c_0 \cdot n^{-1/m(G)}$. Therefore, to have any hope of establishing a semi-sharp threshold  for the anti-Ramsey property at $n^{-1/m_2(H)}$, it is necessary that every $G$ that is anti-Ramsey for $H$ has $m(G) > m_2(H)$.

We say that $H$ is strictly $2$-balanced if $m_2(H) > m_2(H')$ for any proper $H' \subset H$. The next result shows that this necessary condition is actually sufficient for strictly $2$-balanced graphs.

\begin{thm}[Nenadov--Person--\v{S}kori\'c--Steger~\cite{NenPerSkoSte17}, Behague--Hancock--Hyde--Letzter--Morrison~\cite{BehHanHydLetMor24}]
\label{thm:reduction}
Let $H$ be a strictly $2$-balanced graph. If any $G$ which is anti-Ramsey for $H$ must have $m(G) > m_2(H)$, then there exists a constant $c_0$ such that \[ \lim_{n \to \infty} \Pr \left( \text{$G_{n,p}$ is anti-Ramsey for $H$} \right) = 0\] whenever $p \le c_0 \cdot n^{-1/m_2(H)}$. 
\end{thm}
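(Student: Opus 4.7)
The goal is to show that for some constant $c_0 > 0$ and $p = c_0 n^{-1/m_2(H)}$, the random graph $G_{n,p}$ admits, with high probability, a proper edge colouring containing no rainbow copy of $H$. Following the standard template for $0$-statements, my plan is to reduce the problem to forbidding a \emph{finite} family of dense obstructions and then bound their occurrence by a first moment computation.

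First, I would try to prove the following combinatorial reduction: under the hypothesis of the theorem, there exists a finite family $\mathcal{F} = \mathcal{F}(H)$ of graphs, each with $m(F) > m_2(H)$, such that a graph $G$ is anti-Ramsey for $H$ only if it contains some $F \in \mathcal{F}$ as a subgraph. The density bound $m(F) > m_2(H)$ comes directly from the hypothesis applied to each $F \in \mathcal{F}$; the nontrivial content is the finiteness. To establish it, I would exploit strict $2$-balance of $H$: any two distinct copies of $H$ share only a proper subgraph of $H$, whose $2$-density is strictly less than $m_2(H)$. This rigidity controls how copies of $H$ can interlock within a potential anti-Ramsey witness and suggests a pruning strategy — taking a minimal anti-Ramsey $G$ much larger than some function of $|H|$ and extracting a smaller anti-Ramsey sub-witness, contradicting minimality.

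Granted such a family $\mathcal{F}$, the probabilistic part is routine. For each $F \in \mathcal{F}$ choose a subgraph $F' \subseteq F$ witnessing $m(F) > m_2(H)$, i.e.\ with $e_{F'}/v_{F'} > m_2(H)$; the expected number of copies of $F'$ in $G_{n,p}$ is then $O\bigl(c_0^{e_{F'}} n^{v_{F'} - e_{F'}/m_2(H)}\bigr)$, with a strictly negative exponent of $n$. Taking $c_0$ small and applying a union bound over the finite family $\mathcal{F}$, we conclude that $G_{n,p}$ a.a.s.\ contains no $F \in \mathcal{F}$, and is therefore not anti-Ramsey for $H$.

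The main obstacle, by a wide margin, is the finiteness claim for $\mathcal{F}$. The hypothesis controls the density of each individual minimal anti-Ramsey graph but says nothing about their number; indeed one expects minimal anti-Ramsey graphs of all sizes to exist, so a pure compactness argument seems unlikely to suffice. What is presumably needed is an honest structural theorem producing, inside any sufficiently large anti-Ramsey $G$, a bounded-size anti-Ramsey sub-configuration — or, equivalently, an explicit colouring scheme that requires only bounded-size local patterns to be absent. The ``Nine Dragon Tree style'' decomposition lemma advertised in the abstract of the paper seems a plausible tool for carrying out exactly this reduction.
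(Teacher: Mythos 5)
This theorem is cited from~\cite{NenPerSkoSte17,BehHanHydLetMor24} and is used as a black box in the present paper; there is no proof of it here to compare against. Note also that the Nine-Dragon-Tree-style decomposition (Proposition~\ref{prop:degen-decomp}), which you suggest as a plausible tool, is used only in the proof of Theorem~\ref{thm:deterministic}, i.e.\ to \emph{verify the hypothesis} of this theorem for dense $H$, and plays no role in proving the reduction itself.

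More importantly, the reduction you propose as the crux of your plan is not merely difficult --- it is provably false. You want a \emph{finite} family $\mathcal{F}$ with $m(F) > m_2(H)$ for every $F \in \mathcal{F}$ such that every graph that is anti-Ramsey for $H$ contains some $F \in \mathcal{F}$. If such an $\mathcal{F}$ existed, set $\delta := \min_{F \in \mathcal{F}} m(F) - m_2(H) > 0$. Take $p = c_1 n^{-1/m_2(H)}$ with $c_1$ from Theorem~\ref{thm:1-statement}; then w.h.p.\ $G_{n,p}$ is anti-Ramsey for $H$, so by your reduction it would w.h.p.\ contain some $F \in \mathcal{F}$. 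But for each fixed $F$, its densest subgraph $F'$ satisfies $e_{F'}/v_{F'} \ge m_2(H)+\delta$, so the expected number of copies of $F'$ in $G_{n,p}$ is $O\bigl(n^{v_{F'} - e_{F'}/m_2(H)}\bigr) = O\bigl(n^{-\delta v_{F'}/m_2(H)}\bigr) \to 0$; a union bound over the finite $\mathcal{F}$ shows that w.h.p.\ $G_{n,p}$ contains no member of $\mathcal{F}$, a contradiction. Thus there must be anti-Ramsey graphs whose dense witnesses have unbounded size, and no uniform first-moment bound over ``minimal anti-Ramsey graphs'' can work. The actual proofs in the cited works do run a first-moment argument over a finite family of bounded-size configurations, but these are \emph{not} characterized by being anti-Ramsey: they are overlapping unions of copies of $H$ whose density exceeds $m_2(H)$ by a fixed margin, produced by a grow-sequence/closure argument in the style of R\"odl--Ruci\'nski. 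The deterministic hypothesis of the theorem is then invoked in the opposite direction from what you intend: once these dense configurations are ruled out, the remaining structure of $G_{n,p}$ decomposes into local pieces of density at most $m_2(H)$, and the hypothesis (in contrapositive: $m(G) \le m_2(H)$ implies $G$ admits a rainbow-$H$-free proper colouring) is used to \emph{colour} each piece, after which the colourings are glued together. Your proposal has the first-moment step but neither the decomposition of $G_{n,p}$ nor the gluing, and its central structural claim is unsalvageable.
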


In other words, this theorem reduces the lower bound to the verification of the completely deterministic necessary condition. This condition was verified in~\cite{NenPerSkoSte17} for cliques on at least $19$ vertices and cycles on at least $7$ vertices. Later works extended the threshold statement for these graphs, obtaining a slightly weaker lower bound\footnote{These results prove a weaker version of the sufficient condition, showing that any $G$ which is anti-Ramsey for the graph $H$ has $m(G) \ge m_2(H)$. The proof of Theorem~\ref{thm:reduction} then implies the result for $p \ll n^{-1/m_2(H)}$.} for cliques~\cite{KohMotParSch23} and cycles~\cite{BarCavMotPar19}, both on $5$ or more vertices. In~\cite{BehHanHydLetMor24} the condition was verified for strictly $2$-balanced graphs $H$ with $1 < m_2(H) < \frac{\delta(H)(\delta(H) + 1)}{2\delta(H) + 1}$, a family that includes all $D$-regular graphs with at least $4D$ vertices.

On the other hand, the triangle is far from being the only counterexample. The previously mentioned works on cliques~\cite{KohMotParSch23} and cycles~\cite{BarCavMotPar19} showed that $K_4, C_4,$ and $K_4$ minus an edge are also thus afflicted. Actually, there are infinitely many counterexamples since forests are known to be problematic~\cite{ColKohMorMot22}. But even beyond that, it was shown in~\cite{AraMarMatMenMorMot22,KohKonMot18} that one can construct infinitely many other such graphs $H$ by taking any $H'$ with $1 < m_2(H') < 2$ and gluing an arbitrary number of triangles on one of its edges. Interestingly, all known counterexamples have bounded maximum $2$-density. One can therefore ask whether the threshold is at $n^{-1/m_2(H)}$ for sufficiently dense graphs $H$.
\\

Our main result answers this question with a happy yes.

\begin{thm}[anti-Ramsey threshold]
\label{thm:threshold}
Any graph $H$ with $m_2(H) \ge 19$ has positive constants $c_0 < c_1$ for which \[ \lim_{n \rightarrow \infty} \Pr \left(\text{$G_{n,p}$ is anti-Ramsey for $H$}\right) = \left\{ \begin{array}{ll}
1, & p \ge c_1 \cdot n^{-1/m_2(H)}, \\
0, & p \le c_0 \cdot n^{-1/m_2(H)}.
\end{array} \right. \]
\end{thm}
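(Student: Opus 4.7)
The $1$-statement is exactly the content of Theorem~\ref{thm:1-statement}, so the plan focuses on the $0$-statement. Two preliminary reductions turn this into a deterministic claim. First, let $H^{\star} \subseteq H$ be a minimal subgraph realising $m_2(H^{\star}) = m_2(H)$; minimality forces $H^{\star}$ to be strictly $2$-balanced with $m_2(H^{\star}) \ge 19$. Since any rainbow copy of $H$ (in a proper edge-colouring) contains a rainbow copy of $H^{\star}$, being anti-Ramsey for $H$ implies being anti-Ramsey for $H^{\star}$, so the $0$-statement for $H^{\star}$ entails the $0$-statement for $H$. Replacing $H$ by $H^{\star}$, I may assume $H$ is itself strictly $2$-balanced. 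Now Theorem~\ref{thm:reduction} reduces the $0$-statement to verifying: every graph $G$ which is anti-Ramsey for $H$ satisfies $m(G) > m_2(H)$. Taking the contrapositive, the remaining task is the deterministic claim that for every graph $G$ with $m(G) \le m_2(H) =: d$ there exists a proper edge-colouring of $G$ with no rainbow copy of $H$.

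The construction of such a colouring follows a pigeonhole template. The plan is to partition $E(G)$ into a bounded number of blocks $B_1,\ldots,B_t$, equip each block with its own private palette $P_i$ (all $P_i$ pairwise disjoint), and properly edge-colour each block using only its own palette. This automatically guarantees properness of the overall colouring. To kill all rainbow copies of $H$, the parameters $t$ and $|P_i|$ are tuned so that any copy of $H$ — which by strict $2$-balance has $d(v_H-2)+1$ edges, forcing $v_H \ge 37$ — must deposit strictly more than $|P_i|$ edges into some single block $B_i$, whence pigeonhole produces two edges of the copy sharing a colour. A subtle point is that the shared colour must land on a non-adjacent pair of edges, since adjacent edges cannot share a colour under properness; the block structure must therefore disperse the high-degree vertices of each copy of $H$ across several blocks.

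The new technical ingredient, and the heart of the proof, is an NDT-style graph decomposition lemma. Nash--Williams applied to $m(G) \le d$ yields a partition of $G$ into $\lceil d \rceil$ forests, which is too coarse: a forest can have arbitrarily large maximum degree and hence large chromatic index, so its palette can easily exceed the pigeonhole budget. The new decomposition refines this by additionally controlling the component structure of some of the parts, in the spirit of the Nine Dragon Tree theorem — for example, by bounding the sizes of components in certain parts, which in turn bounds their chromatic index. With this control, each block is properly edge-colourable using a short palette and the pigeonhole argument goes through. I expect the main obstacle to be twofold: (i) proving an NDT-style decomposition with parameters matching exactly the needs of the colouring under only the hypothesis $m(G) \le d$ and no control on $\Delta(G)$; and (ii) handling the non-adjacency requirement above, so that the forced colour repetition in every copy of $H$ occurs between edges a proper colouring is actually allowed to identify. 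The assumption $m_2(H) \ge 19$ provides the quantitative slack to balance the number of blocks, the palette sizes, and the component bounds so that these constraints are simultaneously satisfiable.
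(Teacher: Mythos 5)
Your macro-structure is correct and matches the paper: pass to a strictly $2$-balanced $H^{\star}\subseteq H$ with $m_2(H^{\star})=m_2(H)$, invoke Theorem~\ref{thm:reduction}, and reduce to the deterministic claim that every $G$ with $m(G)\le m_2(H)$ has a proper edge-colouring with no rainbow $H$, via a decomposition of $E(G)$ into pieces with private palettes. But the mechanism you describe for the deterministic step has genuine gaps in the two places that actually carry the proof.

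First, the pigeonhole plan as stated cannot work. If every block $B_i$ has a short palette $P_i$ and the argument is that some block must receive more than $|P_i|$ edges of every copy of $H$, you implicitly need $\sum_i |P_i|<e_H$, i.e.\ the \emph{total} number of colours used to colour $G$ must be less than $e_H$. This is impossible when $G$ is dense: $\Delta(G)$ can be arbitrarily large, so any proper colouring of $G$ needs arbitrarily many colours, and nothing prevents a copy of $H$ from distributing its edges so that no fixed-size block ever overflows. The paper circumvents this by allowing one part --- the remainder $G\setminus B$ --- to have arbitrarily many colours (each edge gets a fresh colour). The reason a copy of $H$ nevertheless cannot live mostly in that part is not pigeonhole but \emph{degeneracy}: $G\setminus B$ is $k$-degenerate (with $k=\lfloor m(G)\rfloor$), and Lemma~\ref{lem:k-degen} quantifies how far a graph $H$ with $d_2(H)\ge m(G)$ is from being $k$- or $(k+1)$-degenerate, forcing any copy of $H$ to place at least $\binom{k-1}{2}+\eps(v_H-2)$ edges into $B$. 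That structural, anti-degeneracy fact is the missing engine of the argument and cannot be replaced by counting.

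Second, the decomposition itself is misdescribed. You propose an NDT-style refinement of Nash--Williams by ``bounding the sizes of components'' of some parts; but bounded component size does not bound chromatic index (a star has tiny components but large chromatic index), and the Nine Dragon Tree theorem controls the \emph{maximum degree} of one forest, not component sizes. The actual decomposition (Proposition~\ref{prop:degen-decomp}) writes $G$ as a union of forests $F_K,\dots,F_{k+1}$ with each $F_i$ having $\Delta(F_i)\le\lceil i/(i-m(G))\rceil$, so that $B_j=\bigcup_{i>j}F_i$ has bounded maximum degree and $G\setminus B_j$ is $j$-degenerate --- a simultaneous (bounded-degree, low-degeneracy) split, proved via a Hall-type argument on an orientation inherited from a degeneracy order. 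Finally, your ``non-adjacency'' worry is a red herring: once $B$ is properly coloured with $b+1$ colours, any $b+2$ edges of $H$ inside $B$ automatically repeat a colour, and since the colouring is already proper those two edges are automatically non-adjacent; there is no extra condition to arrange.
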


To locate the threshold we verify the deterministic condition of Theorem~\ref{thm:reduction} with the next theorem. Our result actually yields a stronger colouring scheme; given a graph $G$ with $m(G) \ge 18$ we will find one colouring that simultaneously avoids rainbow copies of all $H$ with $m_2(H) \ge m(G)$.

\begin{thm}
\label{thm:deterministic}
Any graph $G$ with $m(G) \ge 18$ admits a proper edge-colouring where every rainbow subgraph has $2$-density strictly below $m(G)$. 
\end{thm}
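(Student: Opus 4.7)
Let $d = m(G)\ge 18$. Given a proper edge-colouring of $G$ and $V'\subseteq V(G)$, write $c(V')$ for the number of distinct colours appearing on edges of $G[V']$. The largest rainbow subgraph on vertex set $V'$ has exactly $c(V')$ edges (pick one edge per colour class hitting $G[V']$), so since being rainbow is hereditary the conclusion of the theorem is equivalent to the local colour bound
\[
c(V') \;<\; d\bigl(|V'|-2\bigr) + 1 \qquad \text{for every } V' \subseteq V(G) \text{ with } |V'| \ge 3.
\]

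The strategy is to derive the colouring from an edge partition $E(G) = E(H_1) \sqcup \cdots \sqcup E(H_k)$ by properly edge-colouring each $H_i$ with a disjoint palette. The resulting colouring is proper and satisfies $c(V') \le \sum_{i=1}^{k} \min\bigl(\chi'(H_i),\, e(H_i[V'])\bigr)$. Two easy bounds are available: $e(G[V'])\le \binom{|V'|}{2}$ beats the target for small $|V'|$ (up to about $2d-1$), and a Nash--Williams decomposition into $\lceil d\rceil$ pseudoforests gives $e(H_i[V'])\le|V'|$ and beats the target for large $|V'|$ (from about $2d+1$). These two regimes meet in a critical range $|V'|\approx 2d$ in which both bounds fall short by an additive $O(1)$, and the hypothesis $m(G)\ge 18$ is precisely what keeps this intermediate range narrow enough to be handled by a single decomposition.

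The crux of the argument is therefore a graph decomposition lemma in the style of the Nine-Dragon-Tree theorem: any $G$ with $m(G)\le d$ admits an edge-partition into $\lceil d\rceil$ pieces, mostly pseudoforests and with one or more of the pieces forced to be ``extra sparse'' (e.g.\ a forest, or of bounded maximum degree), so that $\sum_{i} e(H_i[V'])\le d(|V'|-2)$ uniformly in $V'$ with $|V'|\ge 3$. Combined with the disjoint-palette colouring of each piece this immediately yields the required bound on $c(V')$.

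\textbf{Main obstacle.} The main difficulty is proving the decomposition lemma. NDT-style arguments start from an arbitrary decomposition and perform augmenting edge-exchanges between pieces to decrease a potential function measuring violation of the desired inequality. Here the condition must hold uniformly over every vertex subset, not just globally, so both the potential and the augmenting moves must respond to the worst offending $V'$ at each step. I expect the proof to identify, whenever the bound fails at some $V'$, a swap of two edges between two pieces that strictly reduces the excess at that $V'$ without creating a new violation elsewhere, with the bound $d\ge 18$ guaranteeing the numerical slack needed for such a swap to be available in the tight regime $|V'|\approx 2d$.
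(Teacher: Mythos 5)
There are genuine gaps, both computational and structural.

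First, the stated target of your decomposition lemma, $\sum_i e(H_i[V']) \le d(|V'|-2)$, cannot be correct as written: since the $H_i$ form an edge-partition, the left-hand side equals $e(G[V'])$ regardless of the decomposition, and this is not at most $d(|V'|-2)$ in general (a subgraph attaining $m(G)=d$ has $e(G[V']) = d|V'|$). What you actually need is $\sum_i \min\bigl(\chi'(H_i), e(H_i[V'])\bigr) \le d(|V'|-2)$, and then the decomposition must give pieces whose chromatic indices are small enough to win; this is where the real work lies, and the proposal does not do it.

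Second, the ``easy bound for large $|V'|$'' is false. A Nash--Williams decomposition into $\lceil d\rceil$ pseudoforests gives $\sum_i e(H_i[V']) \le \lceil d\rceil |V'|$, and the target is $d|V'| - 2d + 1$. When $d$ is an integer the gap is $2d-1$ for every $|V'|$, not just a critical range; when $d$ is not an integer the gap is $(\lceil d\rceil - d)|V'| + 2d - 1$, which grows linearly in $|V'|$. So the pseudoforest decomposition never ``beats the target,'' not even asymptotically, and the two regimes you describe do not meet in a bounded window. This is precisely why one cannot get away with pseudoforests alone and why the extra sparsity has to carry the argument everywhere, not merely in a critical band.

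Third, the decomposition lemma itself is left entirely to speculation, and the speculative mechanism (an augmenting edge-swap that reduces a potential responding to the worst offending $V'$) is not what the paper does, and it is unclear it can be made to work since a single swap can create violations at many other vertex subsets simultaneously. The paper instead orients $G$ along a degenerate ordering so that $\Delta^+\le\lfloor 2m(G)\rfloor$, and then repeatedly peels off a forest $F_i$ by a Hall's-theorem matching argument that bounds $\Delta(F_i)$ by $\lceil i/(i-m(G))\rceil$ while decreasing the maximum out-degree by one; this gives bounded-degree forests $F_K,\dots,F_{k+1}$ whose complement $G\setminus\bigcup F_i$ is $k$-degenerate with $k=\lfloor m(G)\rfloor$. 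It then colours each forest with a fresh palette of $\Delta(F_i)$ colours via K\"onig, gives the degenerate remainder all-distinct colours, and shows via a counting lemma that any $H$ with $d_2(H)\ge m(G)$ is so far from being $k$- or $(k+1)$-degenerate that it must re-use a colour from the forests. The crucial structural feature the paper exploits, and your proposal lacks, is that one piece is globally degenerate (not merely a pseudoforest); this is what converts the $2$-density hypothesis on $H$ into a guaranteed large intersection with the small-palette pieces.

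Your reformulation as a local colour bound $c(V') < d(|V'|-2)+1$ is valid and is a reasonable alternative lens, but without a correct and proven decomposition lemma the argument does not go through.
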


\begin{remark}
We do not make a special effort to optimize the constants, since we cannot bridge the gap to the densest counterexamples. See the discussion after the proof of Theorem~\ref{thm:deterministic} for more details.
\end{remark}

Our proof hinges on a graph decomposition statement, Proposition~\ref{prop:degen-decomp}, which bares some similarities to the Nine Dragon Tree theorem~\cite{JiaYan17}, and may be of independent interest. It allows us to decompose the edges of $G$ into two low-degeneracy graphs one of which has bounded degree.

\subsection*{Acknowledgements}
The author would like to thank Wojciech Samotij for his careful reading and valuable comments on earlier drafts of this paper.

\section{The proof}

We begin by painting the general outline of our argument. Given graphs $G$ and $H$, suppose we want to properly colour the edges of $G$ without a rainbow copy of $H$. We aim to find a subgraph $B \subseteq G$ such that: \begin{enumerate}

\item the graph $B$ is $b$-\emph{bounded}, i.e.\ $\Delta(B) \le b$, and

\item any copy of $H$ in $G$ must use at least $b+2$ edges from $B$.
\end{enumerate}

If we manage to do that, we colour the edges of $G$ in the following way. First, use Vizing's theorem to properly colour the edges of $B$ with at most $b+1$ colours. Then, we give every edge of $G' = G \setminus B$ a new distinct colour. This colouring is proper, plus, as any copy of $H$ uses $b+2$ edges of $B$ it must therefore use some colour twice.

In the next section we present a decomposition lemma that will give us a sequence of bounded degree graphs $B$ whose complements are degenerate. Then, we will show how to utilize this degeneracy to obtain the second property mentioned above, which will allow us to prove Theorem~\ref{thm:deterministic}. As a result we will also prove Theorem~\ref{thm:threshold}.

\subsection{The decomposition} The following is the main technical result of the paper.

\begin{prop}[Degenerate decomposition]
\label{prop:degen-decomp}
Let $G$ be a graph and write $k = \lfloor m(G) \rfloor$ and $K = \lfloor 2m(G) \rfloor$. There are forests $F_K, \dots, F_{k+1}$ such that \begin{enumerate}[label=(\alph*)]
\item \label{item:bounded} For each $K \ge i \ge k+1$ the graph $F_i$ is $\lceil \frac{i}{i - m(G)} \rceil$-bounded.
\item \label{item:degen} For each $k \le j \le K$ the graph $B_j \coloneqq \bigcup_{i > j} F_i$ is $(K-j)$-degenerate, and the graph $G \setminus B_j$ is $j$-degenerate.
\end{enumerate}
\end{prop}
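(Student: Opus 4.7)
The plan is to derive both parts of the proposition from a single acyclic orientation of $G$ together with a careful edge colouring of it. Write $\mu = m(G)$. Since every subgraph $H \subseteq G$ satisfies $2|E(H)| \le 2\mu|V(H)|$, it contains a vertex of degree at most $\lfloor 2\mu \rfloor = K$, so $G$ admits a degeneracy ordering $v_1 < \cdots < v_n$ in which every $v_i$ has at most $K$ later neighbours. Orienting each edge from its earlier to its later endpoint produces an acyclic $\vec{G}$ with maximum out-degree at most $K$.

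I would then colour the edges so that the out-edges at each vertex receive pairwise distinct labels in $\{1,\ldots,K\}$; this automatically decomposes $G = F_1 \cup \cdots \cup F_K$, and each class $F_c$ has out-degree at most $1$ in the acyclic $\vec{G}$, hence is a forest. Property~\ref{item:degen} then comes for free: the union $B_j = F_{j+1} \cup \cdots \cup F_K$ inherits an acyclic orientation of maximum out-degree $K-j$, so it is $(K-j)$-degenerate in the same ordering; symmetrically $G \setminus B_j = F_1 \cup \cdots \cup F_j$ has maximum out-degree $j$ and so is $j$-degenerate. Thus (b) holds for any such colouring, and the entire difficulty is concentrated in (a).

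To enforce $\Delta(F_c) \le B_c := \lceil c/(c-\mu) \rceil$ for every $c \in \{k+1,\ldots,K\}$, I would choose the colouring by a variational/exchange argument. Since each $F_c$ already has out-degree at most $1$, the task is really to balance the in-degrees across colour classes. Among all proper colourings, pick one minimising
\[ \Phi(\chi) \;=\; \sum_{c=k+1}^{K} \sum_{v \in V(G)} \bigl( d_{F_c}(v) - B_c \bigr)_{+}, \]
and argue $\Phi(\chi) = 0$. Otherwise an overloaded pair $(v,c)$ forces $v$ to have at least $B_c$ in-edges of colour $c$; choosing one such in-edge $(u,v)$, I would try to recolour it with a free label at $u$, propagating along an augmenting chain of swaps whenever a direct swap would either conflict at the tail or create a fresh violation at the head.

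The main obstacle, very much in the spirit of the Nine Dragon Tree theorem~\cite{JiaYan17}, is to prove that the augmenting chain always closes and produces a net decrease of $\Phi$. The natural route is to suppose otherwise, extract a maximally ``stuck'' configuration out of all the vertices and colours reached during the attempted exchange, and show that the supporting edges form a subgraph of $G$ whose edge-to-vertex ratio strictly exceeds $\mu$, contradicting $m(G) \le \mu$. The specific threshold $B_c = \lceil c/(c-\mu) \rceil$ should emerge exactly from balancing the contribution of each stuck vertex (it carries at least $B_c$ edges of colour $c$) against the $\mu$ density budget it consumes, so that the stuck subgraph is forced over the $\mu$ limit as soon as any $B_c$ is exceeded.
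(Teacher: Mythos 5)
Your setup is essentially the same as the paper's: both take a degeneracy ordering witnessing $K$-degeneracy, orient all edges along it to get an acyclic digraph of maximum out-degree $\le K$, observe that a colour class with out-degree $\le 1$ in this acyclic orientation is a forest, and deduce part~\ref{item:degen} immediately from the out-degree bound in the fixed ordering. The divergence is entirely in how the degree bound~\ref{item:bounded} is achieved. The paper peels off the forests $F_K, F_{K-1}, \dots, F_{k+1}$ one at a time: Lemma~\ref{lem:directed-decomp} uses Hall's theorem on an auxiliary bipartite graph (between the vertices of out-degree exactly $D$ and $c$ copies of their out-neighbourhood) to find a matching, and a Hall violator would be a subgraph of density strictly greater than $m(G)$. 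The bound $\lceil i/(i - m(G)) \rceil$ for $F_i$ falls out because at the $i$-th stage the remaining digraph has $D = i$.

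Your all-at-once scheme --- colour all out-edges with distinct labels in $\{1,\dots,K\}$ and minimise a potential $\Phi$ counting in-degree overloads --- is the genuinely hard step, and you have not proved it. Two concrete worries. First, the exchange argument is only gestured at: you would need to show that whenever $\Phi>0$ there is a recolouring strictly decreasing $\Phi$, and in a multi-constraint discharging of this kind the usual failure mode is precisely that an augmenting swap lowers one colour's overload while raising another's, leaving $\Phi$ unchanged. You acknowledge this (``the main obstacle''), but you never extract the density contradiction, and unlike the Hall argument it is not clear that a ``maximally stuck'' configuration yields a clean subgraph $I$ with $e_I/v_I > m(G)$: the stuck set would span several colours with different targets $B_c$, so the accounting is far messier than in the paper, where each extraction step involves a single threshold. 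Second, the specific values $B_c = \lceil c/(c-\mu) \rceil$ are not natural for the simultaneous problem; in the paper they arise because the $i$-th extraction sees a digraph of out-degree $i$, a structure your static colouring does not have. You assert the thresholds ``should emerge exactly from balancing,'' but without a computation this is a conjecture. The easier and cleaner route, which is what the paper does, is to decouple the colours by peeling one forest at a time, turning the problem into $K-k$ independent applications of a single matching lemma.
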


\begin{remark}
The degeneracy of $B_j$ will not play a part in our proof. We will only use the fact that it is bounded. Indeed, as a corollary of \ref{item:bounded} we have that $\Delta(B_j) \le \sum_{i>j} \Delta(F_i)$.
\end{remark}

Our decomposition relies on a repeated application of the next lemma.

\begin{lemma}
\label{lem:directed-decomp}
Let $J$ be a digraph and let $D = \Delta^+(J)$, and suppose that $D > m(J)$. Then there is an edge decomposition $J = J' \cup F$ where \[\text{$\Delta^+(J') \le D-1$, $\Delta^+(F) \le 1$, and $\Delta(F) \le \left\lceil \frac{D}{D - m(J)} \right\rceil$}.\]
\end{lemma}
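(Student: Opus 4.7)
The plan is to construct $F$ by selecting exactly one out-edge from each vertex attaining the maximum out-degree. Set $S \coloneqq \{v \in V(J) : d^+_J(v) = D\}$ and $t \coloneqq \lceil D/(D - m(J)) \rceil - 1$; since $D > m(J) \ge 0$ we have $D/(D-m(J)) > 1$, so $t \ge 1$. I will construct a map $f : S \to V(J)$ with $(v, f(v)) \in E(J)$ for every $v \in S$ and with $|f^{-1}(u)| \le t$ for every $u \in V(J)$. Setting $F \coloneqq \{(v, f(v)) : v \in S\}$ and $J' \coloneqq J \setminus F$, this immediately yields $\Delta^+(F) \le 1$, $\Delta^+(J') \le D - 1$ (every vertex in $S$ loses exactly one out-edge and every vertex outside $S$ already had out-degree at most $D-1$), and undirected degree $d^+_F(u) + d^-_F(u) \le 1 + t = \lceil D/(D - m(J)) \rceil$ at every vertex $u$.

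The existence of $f$ reduces to a bipartite $b$-matching: form the bipartite graph $H$ with parts $S$ and $V(J)$, joining $v \in S$ to $u \in V(J)$ whenever $(v,u) \in E(J)$. The desired map $f$ corresponds to a subgraph of $H$ in which every left-vertex has degree exactly $1$ and every right-vertex has degree at most $t$. Replacing each right-vertex by $t$ copies and applying Hall's theorem, such a subgraph exists if and only if
\[ t \cdot |N_H(T)| \ge |T| \quad \text{for every } T \subseteq S. \]

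The crux is verifying this Hall-type condition from the density hypothesis. Suppose for contradiction that some $T \subseteq S$ satisfies $t|U| < |T|$, where $U \coloneqq N_H(T)$ is the set of out-neighbours of $T$ in $J$. All $D|T|$ out-edges of $T$ lie inside $J[T \cup U]$, which has at most $|T|+|U|$ vertices, so
\[ m(J) \ge \frac{D|T|}{|T|+|U|} > \frac{D|T|}{|T| + |T|/t} = \frac{Dt}{t+1}, \]
where the strict inequality uses $|U| < |T|/t$. On the other hand, the definition of $t$ gives $t+1 \ge D/(D - m(J))$, which rearranges to $m(J) \le Dt/(t+1)$, contradicting the previous display.

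The main technical obstacle is this density estimate: the strict inequality $m(J) > Dt/(t+1)$ depends on the strict inequality $|U| < |T|/t$ that the Hall violation provides, and the ceiling in the definition of $t$ is exactly calibrated so that the two bounds on $m(J)$ collide. Once Hall is verified the decomposition $J = J' \cup F$ satisfies all three claimed degree bounds, completing the proof.
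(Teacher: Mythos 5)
Your proof is correct and takes essentially the same route as the paper: same auxiliary bipartite graph with right-side vertices multiplied (the paper uses $c$ copies where $c = \lceil m(J)/(D-m(J))\rceil$, which equals your $t = \lceil D/(D-m(J))\rceil - 1$), same Hall-type verification via the density bound on $J[T\cup U]$, and the same accounting of out- and in-degrees in $F$. The only cosmetic differences are that the paper restricts the right side to $N^+(L)$ rather than all of $V(J)$, and parameterises by $c$ and derives the threshold, whereas you fix $t$ up front and derive a contradiction.
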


The case $D = \lfloor m(J) \rfloor + 1$ appears in the work of Gao and Yang~\cite{GaoYan23} on digraph analogues of the Nine Dragon Tree theorem. To better understand this lemma we make a short comparison with the Nine Dragon theorem for psuedoforests, which we paraphrase here.

\begin{thm}[Fan--Li--Song--Yang~\cite{FanLiSonYan15}]
Let $G$ be a graph with $m(G) = k + \eps$, where $k$ is an integer and $\eps \in [0,1)$. Then there is an edge decomposition $G = G' \cup F$ and an orientation of the edges such that \[\text{$\Delta^+(G') \le k$, $\Delta^+(F) \le 1$, and $\Delta(F) \le \left\lceil \eps \cdot \frac{k+1}{1-\eps} \right\rceil$}.\]
\end{thm}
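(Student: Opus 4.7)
The plan is to construct $F$ by selecting, for every vertex $v$ with $d^+_J(v) = D$, exactly one out-arc leaving $v$, and to place every other arc of $J$ into $J'$. Writing $V_D = \{v : d^+_J(v) = D\}$ and $t = \lceil D/(D-m(J))\rceil$, this immediately gives $\Delta^+(J') \le D-1$ and $\Delta^+(F) \le 1$, so the only nontrivial constraint is $\Delta(F) \le t$. Since $d_F(w) = \1[w \in V_D] + d^-_F(w)$, this is equivalent to ensuring every $w$ is chosen as head at most $c(w) \coloneqq t - \1[w \in V_D]$ times. Viewing the selection as a capacitated bipartite matching problem --- demand one at each $v \in V_D$, capacity $c(w)$ at each $w \in V(J)$, and edges along the out-arcs of $J$ --- the standard feasibility criterion (the capacity form of Hall's theorem, or equivalently max-flow--min-cut) says that the required $F$ exists if and only if
\[
t\,|N^+_J(A)| - |N^+_J(A) \cap V_D| \;\ge\; |A| \qquad \text{for every $A \subseteq V_D$.}
\]

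To verify this, fix $A \subseteq V_D$ and let $T = N^+_J(A)$. Every vertex of $A$ sends all $D$ of its out-arcs into $T$, so $e(J[A \cup T]) \ge |A|D$, while the definition of $m(J)$ gives $e(J[A \cup T]) \le m(J) \cdot |A \cup T| = m(J)(|A| - |A \cap T| + |T|)$. Rearranging,
\[
|A| \;\le\; \frac{m(J)}{D - m(J)}\bigl(|T| - |A \cap T|\bigr).
\]
Now write $|T| = a + b + c$ with $a = |A \cap T|$, $b = |T \cap V_D| - |A \cap T|$ (nonnegative because $A \subseteq V_D$ forces $A \cap T \subseteq T \cap V_D$), and $c = |T| - |T \cap V_D|$. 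The inequality $t \ge D/(D - m(J))$ gives both $t \ge m(J)/(D-m(J))$ and $t - 1 \ge m(J)/(D-m(J))$, which combine to
\[
t|T| - |T \cap V_D| = (t-1)(a+b) + tc \;\ge\; \frac{m(J)}{D-m(J)}(b+c) \;\ge\; |A|,
\]
exactly Hall's condition.

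The step I expect to be most delicate is this final arithmetic: the density bound naturally controls $|A \cup T|$, but Hall's condition involves $|T|$ together with the reduced capacities on $V_D$, so one has to carefully split $|T|$ according to its intersections with $A$ and $V_D$ --- respecting the overlap $A \cap T$ and the fact that capacity is reduced only on $V_D$-vertices --- in order to convert the single density inequality into the Hall inequality with the precise constant $\lceil D/(D-m(J))\rceil$ rather than something slightly weaker. Once Hall is verified, the matching furnishes the desired $F$ and $J = J' \cup F$ satisfies all three stated properties by construction.
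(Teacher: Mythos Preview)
Your argument is a correct proof of the paper's Lemma~2.2 (the directed-decomposition lemma), but it does \emph{not} prove the stated Fan--Li--Song--Yang theorem. You establish $\Delta(F)\le t=\lceil D/(D-m(J))\rceil$; translating via an orientation with $D=k+1$ and $m(J)=k+\eps$ this is $\lceil(k+1)/(1-\eps)\rceil$, whereas the theorem demands the strictly smaller bound $\lceil\eps(k+1)/(1-\eps)\rceil$. The paper itself points out exactly this discrepancy in the paragraph following the theorem statement: the Hall-type argument with a fixed orientation ``falls short by a factor of~$\eps$.''

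There is a genuine reason your approach cannot close this gap. You start from a \emph{given} orientation $J$ and never reorient, so the arc you place in $F$ from each $v\in V_D$ is forced to be one of $v$'s current out-arcs. The Fan--Li--Song--Yang theorem, by contrast, is allowed to choose the orientation jointly with the decomposition, and exploiting that freedom (reorienting along paths to redistribute out-degrees before peeling off $F$) is precisely what buys the extra factor of~$\eps$. Your density/Hall computation is tight for the fixed-orientation problem and cannot be pushed further without that additional ingredient.

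Incidentally, the paper does not prove the Fan--Li--Song--Yang theorem at all; it is merely quoted for comparison. What you have written is essentially the paper's own proof of Lemma~2.2 --- Hall's theorem on a capacitated bipartite graph between the max-out-degree vertices and their out-neighbourhood --- with slightly tidier bookkeeping: you assign capacity $t-1$ to vertices of $V_D$ directly, while the paper uses a uniform capacity $c=\lceil m(J)/(D-m(J))\rceil$ and adds $1$ at the end, reaching the same $t=c+1$.
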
 

Let $m(G) = k+\eps$ where $k = \lfloor m(G) \rfloor$. Using Hakimi's theorem~\cite{Hak65}, one can orient the edges of $G$ into a digraph $J$ with $\Delta^+(J) \le \lceil m(G) \rceil = k+1$. Then, the above lemma outputs a decomposition where the maximum total degree of $F$ is bounded by $\left\lceil \frac{k+1}{1-\eps} \right\rceil$. This falls short by a factor of $\eps$ when put against the bound in the theorem. 

There is a subtle difference however. In the lemma we are given a specific orientation and we stick with it in the decomposition, while in the Nine Dragons theorem one can choose the orientation of the base graph. This point will be crucial in our application as we will fix an orientation corresponding to a degenerate ordering, and respecting this orientation will ensure that the parts are also degenerate.  We proceed by proving Proposition~\ref{prop:degen-decomp} given the lemma, and supply its proof right afterwards.

\begin{proof}[Proof of Proposition~\ref{prop:degen-decomp}]

The average degree of every subgraph of $G$ is bounded from above by $2m(G)$, so every subgraph contains a vertex of degree at most $\lfloor 2m(G)\rfloor = K$. This in turn means that $G$ is $K$-degenerate. Let $J_K$ be the digraph obtained by directing the edges of $G$ according to the degenerate ordering, so that every vertex has out-degree at most $K$. 

By induction on $K \ge i \ge k+1$, and starting with $i = K$, we have $\Delta^+(J_i) \le i$. If $\Delta^+(J_i) \le i-1$ we set $J_{i-1} = J_i$ and set $F_i$ to be empty. Otherwise $\Delta^+(J_i) = i$ and we may apply Lemma~\ref{lem:directed-decomp} to $J_i$ as $m(J_i) \le m(G) < i$. Then, we get a decomposition $J_i = J_{i-1} \cup F_i$ where \[\text{$\Delta^+(J_{i-1}) \le i-1$, $\Delta^+(F_i) \le 1$, and $\Delta(F_i) \le \left\lceil \frac{i}{i - m(G)} \right\rceil$.}\] Note that the orientation of the edges is induced by a fixed ordering of the vertices. Therefore, the out-degree bound means that each $F_i$ is $1$-degenerate, or in other words a forest, completing the proof of~\ref{item:bounded}. Furthermore, the same consideration tells us that the graphs $B_{K-j} = F_K \cup \dots \cup F_{K-(j-1)}$ and $J_j = G\setminus B_j$ are $j$-degenerate for each $k \le j \le K$, thus proving~\ref{item:degen}.
\end{proof}

\begin{proof}[Proof of Lemma~\ref{lem:directed-decomp}]
Given a set $U \subseteq V(J)$ we write $N^+(U)$ for the out-neighbourhood of $H$, i.e.\ the set of vertices $v \in V(J)$ such that $uv \in J$ for some $u \in U$. Let $L$ denote the set of vertices whose out-degree is exactly $D$, and let $R = N^+(L)$. Given any positive integer $c$ we define an auxiliary bipartite graph $A$ where one side is $L$ and the other side is $R \times \br{c}$. For every pair of vertices $u \in L$ and $v \in R$ we add the edges between $u$ and each copy of $(v, i)$ for all $i \in \br{c}$.

We would like to apply Hall's theorem in order to find a matching from $L$ to $R \times \br{c}$ in $A$. Suppose to the contrary that Hall's condition does not hold. Then there is a subset $U \subseteq L$ of order $u$ whose neighbourhood in $A$ has fewer than $u$ vertices. Since that neighbourhood is of the form $N^+(U) \times \br{c}$, we find that $|N^{+}(U)| < \frac{u}{c}$. Now, consider the induced subgraph $I \coloneqq J[U \cup N^+(U)]$. It has at least $u \cdot D$ edges, and strictly less than $u + \frac{u}{c}$ vertices. As a result we have \[m(J) \ge \frac{e_{I}}{v_{I}} > \frac{D}{1 + 1/c},\] which implies \[1 + \frac{1}{c} > \frac{D}{m(J)} =  \frac{m(J) + (D - m(J))}{m(J)} = 1  + \frac{1}{m(J) / (D-m(J))},\] and therefore $c < \frac{m(J)}{D - m(J)}$. So setting $c = \left\lceil \frac{m(J)}{D - m(J)} \right\rceil$ we get that Hall's condition is met and that we have the desired matching.

Let $F$ be the set of edges $uv \in J$ such that our matching contains an edge $u(v,i) \in A$ for some $i \in \br{c}$. From the construction $\Delta^+(F) \le 1$ and $\Delta^-(F) \le c$. Indeed, any edge touching a vertex $u$ must originate from an edge in the matching that uses some vertex $(u,i)$ for $i\in \br{c}$, and since it is a matching any such vertex in $A$ is touched at most once. Thus, the maximum degree of $F$ is at most $c+1 = \left\lceil \frac{m(J)}{D - m(J)} + 1 \right\rceil = \left\lceil \frac{D}{D-m(J)} \right\rceil$ as required. Finally, since every vertex with out-degree $D$ has lost one out-edge to $F$, the remainder $J' = J \setminus F$ has $\Delta^+(J') \le D - 1$. 
\end{proof}

\subsection{Proofs of Theorems~\ref{thm:deterministic} and~\ref{thm:threshold}}

With Proposition~\ref{prop:degen-decomp} at hand we can find in $G$ some bounded degree graph $B$ where $G \setminus B$ is degenerate. To put our plan into action we will need to show that $G \setminus B$ is far from containing a copy of any $H$ of some prescribed $2$-density, and so it suffices for us to control how far $H$ is from being degenerate. The next short lemma handles this task.

\begin{lemma}
\label{lem:k-degen}
Let $H$ be a graph with $d_2(H) \coloneqq \frac{e_H - 1}{v_H - 2} \ge k + \eps$ where $k$ is an integer and $\eps \in [0,1)$, and let $H' \subseteq H$ be a $d$-degenerate subgraph. Then \[e_H - e_{H'} \ge \binom{d - 1}{2} - (d - k - \eps) \cdot (v_H - 2).\]
\end{lemma}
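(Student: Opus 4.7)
The plan is to sandwich $e_H - e_{H'}$ between a lower bound on $e_H$ coming from the density hypothesis and an upper bound on $e_{H'}$ coming from degeneracy, then collect terms. Unpacking $d_2(H) \ge k + \eps$ gives $e_H \ge (k+\eps)(v_H - 2) + 1$. For the upper bound on $e_{H'}$, I would invoke the standard degeneracy bound: ordering the vertices of $H'$ so that each has at most $d$ earlier neighbours and summing $\sum_{i=1}^{v_{H'}} \min(i-1,d)$ yields $e_{H'} \le d v_{H'} - \binom{d+1}{2}$ (at least when $v_{H'} \ge d+1$, with the trivial bound $\binom{v_{H'}}{2}$ used otherwise; the two bounds coincide at $v_{H'} = d+1$).

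Subtracting and using $v_{H'} \le v_H$ then gives
\[e_H - e_{H'} \ge (k+\eps)(v_H - 2) + 1 - d v_H + \binom{d+1}{2}.\]
Rewriting $(k+\eps)(v_H - 2) - d v_H = -(d-k-\eps)(v_H - 2) - 2d$ and invoking the elementary identity $\binom{d+1}{2} - 2d + 1 = \binom{d-1}{2}$ (which is simply $\tfrac{d^2 + d}{2} - 2d + 1 = \tfrac{d^2 - 3d + 2}{2}$) immediately rearranges into the claimed inequality.

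Since the argument is essentially a one-line computation, I expect no real obstacle. The only point requiring mild care is the edge case $v_{H'} \le d$, where the clean degeneracy bound no longer applies; there one replaces it by $e_{H'} \le \binom{v_{H'}}{2} \le \binom{d-1}{2}$, which reduces the target inequality to $d(v_H - 2) \ge (d-1)(d-2) - 1$, i.e.\ $v_H \ge d$. This is forced by the density hypothesis together with the trivial bound $e_H \le \binom{v_H}{2}$ in the intended dense regime (where $k+\eps$ is large compared to $d$), so no additional ingredient is needed.
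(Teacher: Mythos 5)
Your argument is the same as the paper's: bound $e_{H'}$ by $d\,v_{H'} - \binom{d+1}{2} \le d\,v_H - \binom{d+1}{2}$ (the paper writes this as $\binom{d}{2} + d(v_H - d)$), lower-bound $e_H$ via the $2$-density hypothesis, and rearrange. The paper does not address the small-$v_{H'}$ edge case you flag at the end (and indeed the lemma as stated only makes sense when $d$ is modest relative to $k$ and $v_H$, which is guaranteed in its application with $d \in \{k, k+1\}$); your treatment of it is essentially fine, though note that the clean bound already holds at $v_{H'} = d$, so the exceptional range is $v_{H'} \le d-1$, which is exactly where $\binom{v_{H'}}{2} \le \binom{d-1}{2}$ as you use.
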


\begin{proof}
Let $r = e_H - e_{H'}$. Since $H'$ is $d$-degenerate we have $e_{H'} \le \binom{d}{2} +  d(v_H - d)$. Therefore we can write \[k + \eps \le \frac{e_H - 1}{v_H - 2} \le \frac{d (v_H - d) + \binom{d}{2} - 1 + r}{v_H - 2 }.\] Consequently $r \ge \binom{ d - 1}{2} - (d - k - \eps) \cdot (v_H - 2)$.
\end{proof}

We now make the following observations. Let $d_2(H) \ge k + \eps$. When plugging in $d = k$, the above lemma means that we need to remove at least $\binom{k-1}{2} + \eps \cdot (v_H -2)$ edges from $H$ to get a $k$-degenerate graph. If we plug in $d = k+1$ we need to remove at least $\binom{k}{2} - (1-\eps)\cdot(v_H - 2)$ many edges to make it $(k+1)$-degenerate. This goes to show that if $H$ is close to $(k+1)$-degenerate it must therefore have many vertices, which in turn means that it is even further away from being $k$-degenerate. Combined with the bounds we get from the decomposition lemma, we are now ready to prove the theorems. 

\begin{proof}[Proof of Theorem~\ref{thm:deterministic}]
We apply Proposition~\ref{prop:degen-decomp} to $G$ and get the forests $F_K, \dots, F_{k+1}$ where $k = \lfloor m(G) \rfloor$ and $K = \lfloor 2m(G) \rfloor$. We will properly colour the edges of $G$ according to the decomposition. First, for each $K \ge i \ge k+1$ the forest $F_i$ is bipartite and can be coloured using $\Delta(F_i) \le \lceil \frac{i}{i - m(G)} \rceil$ colours following K\"onig's theorem. We will assign a new set of colours for each forest, and then colour the remaining edges of $G$ by giving each edge a new distinct colour. 

Let $H$ be any graph with $d_2(H) \ge m(G)$. We wish to show that this colouring does not contain a rainbow copy of $H$. Set \[r \coloneqq \sum_{i = k+2}^{K} \left\lceil \frac{i}{i - m(G)} \right\rceil,\]  and suppose first that one needs to remove more than $r$ edges of $H$ to get a $(k+1)$-degenerate graph. Since the graph $G \setminus B_{k+1}$ is $(k+1)$-degenerate, it means that any copy of $H$ must use at least $r+1$ edges from $B_{k+1}$. However, by definition we used at most $r$ colours for the edges of $B_{k+1}$, so any copy of such a graph $H$ must use some colour twice and is therefore not rainbow.

Next, suppose that one can remove at most $r$ edges from $H$ and get a $(k+1)$-degenerate graph. Writing $\eps = m(G) - k$ we can apply Lemma~\ref{lem:k-degen} to get that
 \[r \ge \binom{k}{2} - (1-\eps)(v_H-2),\] and therefore \[v_H - 2 \ge \frac{\binom{k}{2} - r}{1 - \eps}.\] 
Now, the graph $G \setminus B_k$ is $k$-degenerate, so another application of Lemma~\ref{lem:k-degen} tells us that any copy of $H$ in $G$ must use at least \[\eps\cdot (v_H - 2) + \binom{k-1}{2} \ge \eps \cdot \frac{ \binom{k}{2} - r }{1-\eps}   + \binom{k-1}{2}\] edges from $B_k$. Again, if this number exceeds the number of colours used for $B_k$ then $H$ cannot be rainbow. 
To conclude the proof we claim that $\binom{k-1}{2} > k+ 2 + r$ for $k$ sufficiently large, since this would immediately imply the required \[
\begin{split}
\left\lceil \frac{k+1}{1 - \eps} \right \rceil + r & \le \frac{k+1}{1-\eps} +1 + r = (\frac{\eps}{1-\eps} + 1)(k+1) + 1 + r \\
&= \frac{\eps}{1-\eps} (k+1) + k + 2 + r < \frac{\eps}{1-\eps}\left( \binom{k-1}{2} - r - 1\right) + \binom{k-1}{2} \\
&\le \frac{\eps}{1-\eps} \left(\binom{k}{2} - r \right) + \binom{k-1}{2}.
\end{split}
\]

Our claim follows by using the fact that $K \le 2k + 1$ to bound \[
\begin{split}
r &=  \sum_{i = k+2}^{K} \left \lceil  1 + \frac{m(G)}{i - m(G)} \right \rceil \le \sum_{i=1}^{K-k-1} \left(2  + \frac{m(G)}{i + 1 - \eps} \right) \\ 
& \le  2(K - k - 1) + m(G) \cdot \sum_{i = 1}^{K - k - 1} \frac{1}{i} \le 2k + (k+1)\cdot(1 + \log k).
\end{split}\] Therefore, whenever $k \ge 18$ we have $\binom{k-1}{2} > k+2 + r$. 
\end{proof}

\begin{remark}
It is evident that a more careful analysis would yield a better constant, but we choose simplicity in the absence of a tight bound. We point out that the same framework could be more efficient for graphs $H$ with extra structure that are further from being degenerate, e.g.\ if they have higher minimum degree or because they have more vertices compared to $k$ and $\eps$.
\end{remark}

Using Theorem~\ref{thm:deterministic}, the proof of the main theorem is a short corollary.

\begin{proof}[Proof of Theorem~\ref{thm:threshold}]
Let $H$ be a graph with $m_2(H) \ge 19$. Let $H' \subseteq H$ be a strictly $2$-balanced with $m_2(H') = m_2(H)$. We will prove that the reduction of Theorem~\ref{thm:reduction} applies to $H'$, meaning that there is a constant $c_0 > 0$ such that $\text{$\Pr(G_{n,p}$ is anti-Ramsey for $H$)}$ tends to $0$ as $n$ tends to infinity, so long as $p \le c_0 \cdot n^{-1/m_2(H)}$. Of course, if there is a proper edge-colouring such that any copy of $H'$ is not rainbow, then certainly the same applies for $H \supseteq H'$.

Now, any graph $G$ with $m(G) \le m_2(H')$ has two options. Either $m(G) < 18$ in which case \[m_2(G) \le m(G) + 1 < 19 \le m_2(H')\] and therefore $G$ has no copies of $H'$ in it, or $m(G) \ge 18$ meaning we can apply Theorem~\ref{thm:deterministic} to get a proper edge-colouring of $G$ such that any rainbow subgraph has $2$-density strictly below $m(G) \le m_2(H')$. This in turn means that there is no rainbow copy of $H'$ as required. Theorem~\ref{thm:reduction} then allows us to find the constant $c_0$ for the $0$-statement of the threshold, as we discussed above. Since the $1$-statement is always true by Theorem~\ref{thm:1-statement} we have proved the theorem.
\end{proof}

\bibliographystyle{amsplain}
\bibliography{anti_ramsey} 

\providecommand{\bysame}{\leavevmode\hbox to3em{\hrulefill}\thinspace}
\providecommand{\MR}{\relax\ifhmode\unskip\space\fi MR }
\providecommand{\MRhref}[2]{%
  \href{http://www.ams.org/mathscinet-getitem?mr=#1}{#2}
}
\providecommand{\href}[2]{#2}
\begin{thebibliography}{10}

\bibitem{AraMarMatMenMorMot22}
Pedro Ara{\'u}jo, Ta{\'\i}sa Martins, Let{\'\i}cia Mattos, Walner
  Mendon{\c{c}}a, Luiz Moreira, and Guilherme~O Mota, \emph{On the anti-ramsey
  threshold for non-balanced graphs}, arXiv preprint arXiv:2201.05106 (2022).

\bibitem{BarCavMotPar19}
Gabriel~Ferreira Barros, Bruno~Pasqualotto Cavalar, Guilherme~Oliveira Mota,
  and Olaf Parczyk, \emph{Anti-ramsey threshold of cycles for sparse graphs},
  Electronic Notes in Theoretical Computer Science \textbf{346} (2019), 89--98.

\bibitem{BehHanHydLetMor24}
Natalie Behague, Robert Hancock, Joseph Hyde, Shoham Letzter, and Natasha
  Morrison, \emph{Thresholds for constrained ramsey and anti-ramsey problems},
  arXiv preprint arXiv:2401.06881 (2024).

\bibitem{ColKohMorMot22}
Maur{\'\i}cio Collares, Yoshiharu Kohayakawa, Carlos~Gustavo Moreira, and
  Guilherme~Oliveira Mota, \emph{The threshold for the constrained ramsey
  property}, arXiv preprint arXiv:2207.05201 (2022).

\bibitem{FanLiSonYan15}
Genghua Fan, Yan Li, Ning Song, and Daqing Yang, \emph{Decomposing a graph into
  pseudoforests with one having bounded degree}, Journal of Combinatorial
  Theory, Series B \textbf{115} (2015), 72--95.

\bibitem{FraRod86}
Peter Frankl and Vojtech R{\"o}dl, \emph{Large triangle-free subgraphs in
  graphs without {$K_4$}}, Graphs and Combinatorics \textbf{2} (1986), no.~1,
  135--144.

\bibitem{GaoYan23}
Hui Gao and Daqing Yang, \emph{Digraph analogues for the nine dragon tree
  conjecture}, Journal of Graph Theory \textbf{102} (2023), no.~3, 521--534.

\bibitem{Hak65}
S~Louis Hakimi, \emph{On the degrees of the vertices of a directed graph},
  Journal of the Franklin Institute \textbf{279} (1965), no.~4, 290--308.

\bibitem{JiaYan17}
Hongbi Jiang and Daqing Yang, \emph{Decomposing a graph into forests: the nine
  dragon tree conjecture is true}, Combinatorica \textbf{37} (2017), no.~6,
  1125--1137.

\bibitem{KohKonMot14}
Yoshiharu Kohayakawa, Pavlos~B Konstadinidis, and Guilherme~Oliveira Mota,
  \emph{On an anti-ramsey threshold for random graphs}, European Journal of
  Combinatorics \textbf{40} (2014), 26--41.

\bibitem{KohKonMot18}
\bysame, \emph{On an anti-ramsey threshold for sparse graphs with one
  triangle}, Journal of Graph Theory \textbf{87} (2018), no.~2, 176--187.

\bibitem{KohMotParSch23}
Yoshiharu Kohayakawa, Guilherme~Oliveira Mota, Olaf Parczyk, and Jakob
  Schnitzer, \emph{The anti-ramsey threshold of complete graphs}, Discrete
  Mathematics \textbf{346} (2023), no.~5, 113343.

\bibitem{NenPerSkoSte17}
Rajko Nenadov, Yury Person, Nemanja {\v{S}}kori{\'c}, and Angelika Steger,
  \emph{An algorithmic framework for obtaining lower bounds for random ramsey
  problems}, Journal of Combinatorial Theory, Series B \textbf{124} (2017),
  1--38.

\bibitem{RodRuc95}
Vojt{\v{e}}ch R{\"o}dl and Andrzej Ruci{\'n}ski, \emph{Threshold functions for
  ramsey properties}, Journal of the American Mathematical Society \textbf{8}
  (1995), no.~4, 917--942.

\bibitem{RodTuz92}
Vojt{\v{e}}ch R{\"o}dl and Zsolt Tuza, \emph{Rainbow subgraphs in properly
  edge-colored graphs}, Random Structures \& Algorithms \textbf{3} (1992),
  no.~2, 175--182.

\end{thebibliography}
\appendix

\end{document}